\newtheorem{theorem}{Theorem}
\newcommand{\beas}{\begin{eqnarray*}}
\newcommand{\eeas}{\end{eqnarray*}}
\newcommand{\bes} {\begin{equation*}}
\newcommand{\ees} {\end{equation*}}
\newcommand{\be} {\begin{equation}}
\newcommand{\ee} {\end{equation}}
\newcommand{\bea} {\begin{eqnarray}}
\newcommand{\eea} {\end{eqnarray}}
\newcommand{\zt}{\zeta}
\newcommand{\om}{\omega}
\newcommand\partl[2]{\dfrac{\partial{#1}}{\partial{#2}}}
\newcommand{\D}{\mathbb{D}}
\newcommand{\Dbar}{\overline{\mathbb{D}}}
\newcommand{\tor}{\mathbb{T}^2}
\newcommand{\cont}{\mathcal{C}}
\newcommand{\bdy}{\partial}
\newcommand{\rea}{\operatorname{Re}}
\newcommand{\wh}{\widehat}
\newcommand{\G}{\mathcal{G}}
\newcommand{\Z}{\mathcal{Z}}
\newcommand{\CC}{\mathbb{C}^2}
\newcommand{\Cn}{\mathbb{C}^n}
\newcommand{\C} {\mathbb{C}}
\newcommand{\std}{_{\operatorname{st}}}
\begin{document}
\title{	A nonpolynomially convex isotropic two-torus with no attached discs}
\author{Purvi Gupta}
\address{Department of Mathematics, University of Western Ontario, London, Ontario N6A 5B7, Canada}
\email{pgupta45@uwo.ca}

\begin{abstract}
We show --- with the means of an example in $\C^3$ --- that Gromov's theorem on the presence of attached holomorphic discs for compact Lagrangian manifolds is not true in the isotropic (subcritical) case, even in the absence of an obvious obstruction, i.e, polynomial convexity.	  
\end{abstract}
\maketitle
A compact set $X\subset\Cn$ is called {\em polynomially convex} if, for every $z\notin X$, there is a holomorphic polynomial $P$ such that $|P(z)|>\sup_{x\in X}|P(x)|$. It is known that no real compact $n$-dimensional submanifold $M\subset\Cn$ (without boundary) can be polynomially convex. In the particular case when the inclusion $\iota:M\hookrightarrow~\Cn$ is maximally {\em isotropic}	 (or Lagrangian) with respect to $\om\std=i\sum_{1}^{n}dz_j\wedge~ d\overline{z_j}$, i.e., $\iota^*(\om\std)=0$, Gromov (\cite{Gr85}) proves a stronger statement: there is a {\em holomorphic disc attached to} $M$, i.e., there is a 	nonconstant holomorphic map from the unit disc $\D$ to $\Cn$ that is continuous up to the boundary and maps $\bdy\D$ into $M$. Gromov's result is not true in the subcritical case (when $\dim M<n$) as there are several examples of polynomially convex isotropic surfaces in $\C^3$. 

In this note, we confirm that Gromov's result is not true in the subcritical case, even in the absence of polynomial convexity. We produce an explicit nonpolynomially convex two-torus $T\subset\C^3$ that is isotropic with respect to $\om\std$, but has no holomorphic discs attached to it. Note that the isotropicity of $T$ implies that it is both totally real and rationally convex (see \cite{DuSi95}). Examples of totally real tori with no attached holomorphic discs have been given by Alexander (\cite{Al99}) and Duval-Gayet (\cite{DuGa14}) in $\CC$, but such examples cannot be rationally convex in view of Duval-Sibony (see \cite[Theorem~3.1]{DuSi95}) and Gromov's result. In the case of manifolds with boundary, Duval has constructed an example of a nonpolynomially convex Lagrangian surface in $\CC$ that has no attached discs (see \cite{Du91} or \cite{DuSi95}).

\noindent {\em Note}. It was brought to the author's attention --- after the first version of this note was posted --- that a stronger example was constructed in the $\cont^\infty$-category in \cite{IzSaWo16}. The example therein has a nontrivial hull that does not contain {\em any} analytic structure. To the best of our knowledge, no such example is known in the real-analytic category. The example in this note is real-analytic, but contains a holomorphic annulus in its hull.
 
\begin{theorem}
There is a real-analytic two-torus in $\C^3$ that is isotropic with respect to $\om\std$, not polynomially convex, but has no holomorphic discs attached to it. 
\end{theorem}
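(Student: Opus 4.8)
The plan is to exhibit the torus as an explicit real-analytic embedding $\Phi\colon\tor\to\C^3$ of the form
\[
\Phi(\theta,\phi)=\bigl(R_1(\theta,\phi)\,e^{i\theta},\ R_2(\theta,\phi)\,e^{i\phi},\ R_3(\theta,\phi)\,e^{-i(\theta+\phi)}\bigr),
\]
where $R_1,R_2,R_3$ are strictly positive real-analytic functions on $\tor$ to be specified. Positivity of the $R_j$ makes $\Phi$ injective (hence an embedding, since the phases already recover $\theta,\phi$) and will be essential to the disc obstruction. The whole point of fixing the phases to be $\theta,\phi,-(\theta+\phi)$ is that then $z_1z_2z_3=R_1R_2R_3$ is real and strictly positive on $T$ --- a single relation that will both obstruct discs and confine any attached analytic variety to a level set $\{z_1z_2z_3=k\}$.

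To verify isotropy I would use that, writing $z_j=R_je^{i\psi_j}$, one has $\iota^*(\om\std)=\sum_{j=1}^3 d(R_j^2)\wedge d\psi_j$. With the phases above this collapses to
\[
\iota^*(\om\std)=\bigl[\bdy_\theta(R_2^2-R_3^2)-\bdy_\phi(R_1^2-R_3^2)\bigr]\,d\theta\wedge d\phi,
\]
so isotropy is the single scalar equation $\bdy_\theta(R_2^2-R_3^2)=\bdy_\phi(R_1^2-R_3^2)$. This has many positive solutions. The simplest ones --- moduli depending on $\phi$ alone, with $R_1^2-R_3^2$ constant --- are, however, too rigid: they make $z_2$ constant on each $\theta$-circle and tend to render $T$ polynomially convex. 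I would therefore keep the pure phases but let the $R_j$ depend genuinely on \emph{both} variables, solving the displayed equation by prescribing two moduli and integrating for the third.

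The absence of attached discs is the clean part. Let $F=(F_1,F_2,F_3)\colon\D\to\C^3$ be holomorphic, continuous up to $\bdy\D$, with $F(\bdy\D)\subset T$. Since $z_1z_2z_3>0$ on $T$, the function $F_1F_2F_3$ is holomorphic on $\D$ with strictly positive real boundary values; its imaginary part is harmonic and vanishes on $\bdy\D$, so $F_1F_2F_3$ is a positive constant and each $F_j$ is zero-free on $\Dbar$. By the argument principle the winding of $F_1$ about $0$ --- which equals the $\theta$-winding $m$ of the boundary loop in $T$, as $\arg z_1=\theta$ there --- is $0$, and likewise the $\phi$-winding $n$ is $0$; hence $F|_{\bdy\D}$ is null-homotopic in $T$. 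Finally isotropy enters through a primitive $\lambda$ of $\om\std$: as $\iota^*\lambda$ is closed and $F|_{\bdy\D}$ is contractible in $T$, Stokes gives
\[
0=\int_{\bdy\D}F^*\lambda=\int_{\D}F^*(\om\std),
\]
but for holomorphic $F$ the integrand $F^*(\om\std)$ is a nonnegative multiple of the area form, vanishing only where $F'=0$; hence $F$ is constant.

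The main obstacle is non-polynomial convexity, where the ``holomorphic annulus in the hull'' must be produced. The same positive-boundary-value argument, now run on an annulus (its imaginary part is harmonic and vanishes on the \emph{full} boundary), shows that any disc or annulus attached to $T$ has $\prod_jF_j$ constant, so it lies in a single level set $V_k=\{z_1z_2z_3=k\}\cong(\C^\ast)^2$ with boundary on the level curve $\{R_1R_2R_3=k\}\subset T$. Crucially, a nonvanishing holomorphic function on an annulus may have nonzero winding, so the disc obstruction does \emph{not} forbid an annulus whose boundary circles carry a common nonzero class $(a,b)$ --- precisely the loophole I would exploit. I would tune the moduli so that, for one regular value $k_0$, the curve $\{R_1R_2R_3=k_0\}$ is a pair of parallel $(1,1)$-circles (e.g.\ by arranging $R_1R_2R_3$ to depend, near that level, only on $\theta-\phi$), and so that these two circles are exactly the boundary of an explicit holomorphic annulus $\zt\mapsto(\zt\,e^{f(\zt)},\,\zt\,e^{g(\zt)},\,k_0\zt^{-2}e^{-f-g})$ in $V_{k_0}$. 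The maximum principle then places this annulus in $\widehat T$, giving $\widehat T\supsetneq T$, while leaving the disc obstruction untouched. Matching the boundary circles of a genuine holomorphic annulus to level circles of $T$ --- while keeping every $R_j>0$ and the isotropy equation intact --- is the delicate computation at the heart of the construction, and is where I expect essentially all of the real work to lie.
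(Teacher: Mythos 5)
There is a fatal, self-inflicted gap in the part you yourself flag as ``where all the real work lies'': within your framework that work cannot be done. The feature that makes your disc obstruction clean --- $z_1z_2z_3>0$ on $T$ --- kills the annulus as well. Run your own argument on a holomorphic annulus $F:A\to\C^3$ with $F(\bdy A)\subset T$: the harmonic function $\ima(F_1F_2F_3)$ vanishes on \emph{both} boundary circles, so $F_1F_2F_3$ is a positive constant and each $F_j$ is zero-free on $A$. But a zero-free holomorphic function has the \emph{same} winding number on the two (counterclockwise) boundary circles of an annulus (argument principle: $\frac{1}{2\pi i}\int_{\bdy A}dF_j/F_j$ counts the zeros of $F_j$ in $A$, which is $0$); hence on the \emph{oriented} boundary $\bdy A$ the two windings cancel and $[F(\bdy A)]=0$ in $H_1(T)\cong\mathbb{Z}^2$. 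Your isotropy-plus-Stokes step then applies verbatim to the annulus: $\int_AF^*(\om\std)=\int_{\bdy A}F^*\lambda=\langle[\iota^*\lambda],[F(\bdy A)]\rangle=0$, so $F$ is constant. Indeed your explicit candidate $\zt\mapsto(\zt e^{f},\zt e^{g},k_0\zt^{-2}e^{-f-g})$ has strictly positive symplectic area (direct computation with $\lambda=\tfrac{i}{2}\sum(z_jd\overline{z_j}-\overline{z_j}dz_j)$ gives $2\pi(\rho_2^2-\rho_1^2)$-type terms of a fixed sign), yet its oriented boundary is $(1,1)-(1,1)=0$; so its boundary can never lie on an isotropic torus of your form. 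The ``loophole'' you invoke --- nonzero winding of a zero-free function on an annulus --- is real for a single circle but closes itself on the full boundary. In short, any torus with phases $(\theta,\phi,-(\theta+\phi))$ and positive moduli admits \emph{no} nonconstant attached bordered Riemann surface whatsoever, so your route to non-polynomial-convexity is blocked (and, independently, the construction of the $R_j$ is only sketched, never carried out, so existence is not established even up to this obstruction).

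The paper's example evades exactly this trap: the torus is the graph of the real-valued function $\rea p$ over $\tor$ (so there is no global relation forcing attached varieties to avoid the coordinate hyperplanes), and the attached annulus $\Z=\{w^2=(4z^2-1)/(4-z^2)\}$ passes through the two points over $z=0$ and the two points over $w=0$; consequently its oriented boundary represents $(2,2)\neq 0$ in $H_1(T)$ and the area argument does not force it to be constant. The hull is then computed not by exhibiting an obvious variety but by Jimbo's determinant criterion applied to $\G_{\overline p}(\tor)$, followed by an algebraic shear taking that torus to the graph of $\rea p$ while fixing the annulus. If you want to salvage your approach, you must give up the exact phase relation (or at least the global nonvanishing of $z_1z_2z_3$ on the hull) so that an attached surface can carry a nonzero boundary class; as it stands, steps two and three of your plan are mutually exclusive.
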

\begin{proof} Let $p(z,w):=1-4z^2+4w^2-z^2w^2$ and 
	\bes	
		T:=\left\{\Big(z,w,\rea p(z,w)\Big)\in\C^3:z,w\in\bdy\D\right\}.
	\ees
Being the graph of a real-valued function on the torus $\tor:=\bdy\D\times\bdy\D$, $T$ is isotropic with respect to $\om\std$. We will show that $T$ is not polynomially convex, and its polynomial hull (defined below) consists of $T$ and an attached annulus. 

Before we proceed, we fix some notation. If $A\subset\Dbar^2$ and $f:\Dbar^2\rightarrow\C$, then $\G_f(A)=\{(z,w,f(z,w))\subset\C^3:(z,w)\in A\}$ denotes the graph of $f|_A$. If $\zt\in\Dbar^2$, $\G_f(\{\zt\})$ is simplified to $\G_f(\zt)$. For a compact $X\subset\Cn$, the {\em polynomial hull} of $X$ is the set $\wh X=\{z\in\Cn:|P(z)|\leq\sup_{x\in X}|P(x)|\}$. 

Now, let $f(z,w):=\rea(p(z,w))$. In our notation, $T=\G_f(\tor)$. We first consider $T_1:=\G_{\:\overline p}(\tor)$. We claim that
	\be\label{eq_hull}
		\wh{T_1}=T_1\cup\G_p(\Z), 
	\ee
where $\Z=\left\{(z,w)\in\Dbar^2:w^2=\frac{4z^2-1}{4-z^2}\right\}$. Since $p|_{\Z}\equiv 0$, $\G_p(\Z)$ is isomorphic to $\Z$. Moreover, by a computation due to Rudin (see \cite[proof of Theorem~B]{Ru69}) $\Z$ is a connected finite Riemann surface of genus $0$ with two boundary components in $\tor$ --- i.e, $\G_p(\Z)$ is an annulus attached to $T_1$. 

To prove \eqref{eq_hull}, we use a technique due to Jimbo (see \cite{Ji05}). Following the notation in \cite{Ji05}, let 
	\beas
		&& h(z,w)=(zw)^{-2}(z^2w^2-4w^2+4z^2-1);\\
		&& L=(\Dbar\times\{0\})\cup(\{0\}\times\Dbar); \text{and}\\
		&& V=\{(z,w)\in\Dbar^2\setminus(\tor\cup L):\overline{p(z,w)}=h(z,w)\}.
	\eeas
Note that $h(z,w)=\overline{p(z,w)}$ on $\tor$. Next, we compute
	\beas
		\Delta(z,w)=
		\left|
\begin{matrix}
    \partl{p}{z}(z,w) & \partl{p}{w}(z,w) \\
							& \\
     \partl{h}{z}(z,w) & \partl{h}{w}(z,w) \\
   \end{matrix}
\right|
=		\left|
\begin{matrix}
    -8z-2zw^2 &  8w-2z^2w \\
							& \\
     \dfrac{8}{z^3}+\dfrac{2}{z^3w^2} & -\dfrac{8}{w^3}+\dfrac{2}{z^2w^3} \\
   \end{matrix}
\right|,
	\eeas
to obtain $\Delta(z,w)=-16(zw)^{-3}(z-iw)(z+iw)p(z,w)$. Setting $q_1=(z-iw)$, $q_2=z+iw$, $q_3=p(z,w)$, and $Q_j:=\{(z,w)\in\tor:q_j(z,w)=0\}$, $1\leq j\leq 3$, we have that
\bea\label{eq_factors}
	Q_1&=&\{(z,iz)\subset\tor:z\in\bdy\D\};\notag\\
	Q_2&=&\{(z,-iz)\subset\tor:z\in\bdy\D\}; \text{and}\\
	Q_3&=&\Z\cap\tor=\bdy\Z.\notag
\eea
In \cite{Ji05}, Jimbo showed that if $\Delta(z,w)\not\equiv 0$ on $\D^2\setminus L$ and 
	\bes
		J:=\{1\leq j\leq 3:\emptyset\neq Q_j\neq \wh{Q_j},\: \wh{Q_j}\setminus(\tor\cup L)\subset V\}\neq \emptyset,
	\ees
then 
	\bes
		\wh{\G_{\overline{p}}(\tor)}=\G_{\overline{p}}(\tor)
				\cup \bigcup_{j\in J}\{(z,w,\overline{p(z,w)}):(z,w)\in\wh{Q_j}\},
	\ees
and $p$ restricts to a constant on each $\wh{Q_j}$, $j\in J$. In view of \eqref{eq_factors}, $J=\{3\}$, $\wh{Q_3}=\Z$ and, since $p|_{\Z}=\overline{p}|_{\Z}=0$, \eqref{eq_hull} holds --- i.e., there is only one annulus attached to $T_1$. Since $\tor$ is totally real and rationally convex, and $\overline{p}$ is smooth, $T_1=\G_{\overline{p}}(\tor)$ is totally real and rationally convex. Due to a result by Duval and Sibony (see \cite{DuSi95}), $T_1$ is isotropic with respect to some K{\"a}hler form on $\C^3$. But, $\iota^*(\om\std)\neq 0$, where $\iota:T_1\hookrightarrow\C^3$ is the inclusion map. 

We now return to $T:=\G_{f}(\tor)$. Note that the algebraic isomorphism
	\bes
		F(z,w,\eta)\mapsto \left(z,w,\frac{1}{2}(\eta +p(z,w))\right)
	\ees
maps $T_1$ onto $T$ and fixes the variety $\G_{p}(\Z)$. Thus, $\wh T=F(\wh {T_1})=T\cup\G_p(\Z)$. As there are no nontrivial holomorphic discs attached to an annulus, there are none attached to $T$. 
\end{proof}

%%%%%%%%%%%%%%%%%%%%%%%%%%%%%%%%%%%%%%%%%%%%%%%%%%%%%%
\bibliography{isotor}
\bibliographystyle{plain}

\end{document}